\theoremstyle{plain}
\newtheorem{thm}{Theorem}
\newtheorem{lem}{Lemma}
\newtheorem{prop}{Proposition}
\theoremstyle{definition}
\newtheorem{dfn}{Definition}
\theoremstyle{remark}
\newtheorem{rem}{Remark}
\newtheorem*{ackn}{Acknowledgment}
\renewcommand{\lneq}{<}
\title{Operator relations characterizing higher-order differential operators}
\author{W{\l}odzimierz Fechner, Eszter Gselmann and Aleksandra \'{S}wi\k{a}tczak}
\begin{document}

\maketitle

\begin{abstract}
 Let $r$ be a positive integer, $N$ be a nonnegative integer and $\Omega \subset \mathbb{R}^{r}$ be a domain. Further, for all multi-indices $\alpha \in \mathbb{N}^{r}$, $|\alpha|\leq N$, let us consider the partial differential operator $D^{\alpha}$ defined by 
\[
 D^{\alpha}= \frac{\partial^{|\alpha|}}{\partial x_{1}^{\alpha_{1}}\cdots \partial x_{r}^{\alpha_{r}}}, 
\]
where $\alpha= (\alpha_{1}, \ldots, \alpha_{r})$. Here by definition we mean $D^{0}\equiv \mathrm{id}$.  
An easy computation shows that if $f, g\in \mathscr{C}^{N}(\Omega)$ and $\alpha \in \mathbb{N}^{r}, |\alpha|\leq N$, then we have 
\[
\tag{$\ast$}
D^{\alpha}(f\cdot g) = \sum_{\beta\leq \alpha}\binom{\alpha}{\beta}D^{\beta}(f)\cdot D^{\alpha - \beta}(g). 
\]
This paper is devoted to the study of identity $(\ast)$ in the space $\mathscr{C}(\Omega)$. More precisely, if $r$ is a positive integer, $N$ is a nonnegative integer and $\Omega \subset \mathbb{R}^{r}$ is a domain, then we describe those mappings $T_{\alpha} \colon \mathscr{C}(\Omega)\to \mathscr{C}(\Omega)$, $\alpha \in \mathbb{N}^{r}, |\alpha|\leq N$ that satisfy identity $(\ast)$ for all possible multi-indices $\alpha\in \mathbb{N}^{r}$, $|\alpha|\leq N$. Our main result says that if the domain is $\mathscr{C}(\Omega)$, then the mappings $T_{\alpha}$ are of a rather special form. Related results in the space $\mathscr{C}^{N}(\Omega)$ are also presented. 
\end{abstract}

\section{Introduction and preliminaries}

In this paper the set of real numbers is denoted by $\mathbb{R}$, the set of complex numbers by $\mathbb{C}$, and the set of nonnegative integers by $\mathbb{N}$. 

Let $r$ be a positive integer. Elements of $\mathbb{N}^{r}$ will be called $r$-dimensional multi-indices. Sums and differences of multi-indices (of the same dimension) are meant to be componentwise, i.e., if $\alpha, \beta \in \mathbb{N}^{r}$, then 
\[
	\alpha \pm \beta= (\alpha_1 \pm \beta_1,\ldots, \alpha_r \pm \beta_r)
	\]
Further, if $\alpha, \beta \in \mathbb{N}^{r}$, then we write $\alpha \leq \beta$ if for all $i=1, \ldots, r$ we have $\alpha_{i}\leq \beta_{i}$, where $\alpha= (\alpha_{1}, \ldots, \alpha_{r})$ and $\beta= (\beta_{1}, \ldots, \beta_{r})$. If for the multi-indices $\alpha, \beta \in \mathbb{N}^{r}$ we have $\alpha \leq \beta$ and $\alpha \neq \beta$, we will write 
$\alpha \lneq \beta$. 
By the height of a multi-index $\alpha \in \mathbb{N}^{r}$ we understand $|\alpha|= \sum_{i=1}^{r}\alpha_{i}$, where $\alpha= (\alpha_{1}, \ldots, \alpha_{r})$. 
Finally, we will also use the notion of factorial and binomial coefficients in this multi-index setting. If $\alpha, \beta  \in \mathbb{N}^{r}$, then 
\[
 \alpha ! = \alpha_1! \cdots  \alpha_r!
\]
and 
\[
 \binom{\alpha}{\beta} = \binom{\alpha_1}{\beta_1}\cdots\binom{\alpha_r}{\beta_r}=\frac{\alpha!}{\beta!(\alpha-\beta)!}, 
\]
where $\alpha= (\alpha_{1}, \ldots, \alpha_{r})$ and $\beta = (\beta_{1}, \ldots, \beta_{r})$.

Let $r$ be a positive integer, $N$ be a nonnegative integer and $\Omega \subset \mathbb{R}^{r}$ be a domain (i.e. a nonempty, open and connected set). For all multi-indices $\alpha \in \mathbb{N}^{r}$, $|\alpha|\leq N$, let us consider the partial differential operator $D^{\alpha}$ defined by 
\[
 D^{\alpha}= \frac{\partial^{|\alpha|}}{\partial x_{1}^{\alpha_{1}}\cdots \partial x_{r}^{\alpha_{r}}}, 
\]
where $\alpha= (\alpha_{1}, \ldots, \alpha_{r})$. Here by definition we mean $D^{0}= \mathrm{id}$.  Let further 
\[
 \mathscr{C}^{N}(\Omega)= 
 \left\{  f\colon \Omega \to \mathbb{R}\, \vert \, f \text{ is } N \text{ times continuously differentiable}\right\}. 
\]

An easy computation shows that if $f, g\in \mathscr{C}^{N}(\Omega)$ and $\alpha \in \mathbb{N}^{r}, |\alpha|\leq N$, then we have 
\begin{equation}\label{Eq_diff}
D^{\alpha}(f\cdot g) = \sum_{\beta\leq \alpha}\binom{\alpha}{\beta}D^{\beta}(f)\cdot D^{\alpha - \beta}(g). 
\end{equation}

The main aim of this paper will be about the converse in some sense. More precisely, in this paper, we will study the solutions $T_{\alpha}\colon \mathscr{C}^{N}(\Omega)\to \mathscr{C}(\Omega)$ of 
the operator equation 
\[
T_{\alpha}(f\cdot g)=\sum_{\beta\leq \alpha} \binom{\alpha}{\beta} T_{\beta}(f)T_{\alpha-\beta}(g)
\qquad 
\left(f, g\in \mathscr{C}^{N}(\Omega)\right)
\]
for all multi-indices $\alpha \in \mathbb{N}^{r}$, $|\alpha|\leq N$.

Equations analogous to \eqref{Eq_diff} have an important role not only in connection to characterization theorems related to differential operators but also in harmonic and spectral analysis. 

In the following, we will use the notations and the terminology of the monographs Sz\'{e}kelyhidi \cite{Sze06, Sze14} and while considering moment sequences of higher rank, the terminology of \cite{FecGseSze21}.

Let $(G,\cdot)$ be an Abelian semigroup. A nonzero function $m\colon G\to \mathbb{C}$ is called {\it exponential}, if 
\[
m(x\cdot y)=m(x)m(y)
\] 
holds for all $x,y$ in $G$. 
Let $N$ be a nonnegative integer. A function $\varphi\colon G\to \mathbb{C}$ is termed to be a \emph{ moment function of order} $N$, if there exist functions $\varphi_k\colon G \to \mathbb{C}$ such that $\varphi_0=1$, $\varphi_N=\varphi$ and
\begin{equation}
\varphi_k(x\cdot y)=\sum_{j=0}^k \binom{k}{j} \varphi_j(x)\varphi_{k-j}(y)
\label{Eq_moment}
\end{equation} 
for all $x$ and $y$ in $G$ and $k=0,1,\dots,N$. If $G$ is a monoid with the neutral element $1$, then this concept can be extended by relaxing the assumption \hbox{$\varphi_0\equiv 1$} to $\varphi_0(1)=1$. 
In this case, $\varphi_0$ is an arbitrary exponential function and we say that $\varphi_0$ {\it generates the generalized moment sequence of order} $N$ and the function $\varphi_k$ is a {\it generalized moment function of order} $k$, or, if we want to specify the exponential $\varphi_0$, then we say that $\varphi_k$ is a {\it generalized moment function of order} $k$ {\it associated with the exponential} $\varphi_0$.

\begin{dfn}
Let $G$ be an Abelian semigroup, $r$ a positive integer, and for each multi-index $\alpha$ in $\mathbb{N}^r$ 
let $f_{\alpha}\colon G\to \mathbb{C}$ be a function. We say that $(f_{\alpha})_{\alpha \in \mathbb{N}^{r}}$ is a \emph{generalized 
moment sequence of rank $r$}, if 
\begin{equation}\label{Eq3}
f_{\alpha}(x\cdot y)=\sum_{\beta\leq \alpha} \binom{\alpha}{\beta} f_{\beta}(x)f_{\alpha-\beta}(y)
\end{equation}
holds whenever $x,y$ are in $G$. The function $f_{\mathbf{0}}$, where $\mathbf{0}$ is the zero element in $\mathbb{N}^r$, is called the \emph{ generating function} of the sequence.
\end{dfn}

\begin{rem}
 For $r=1$, instead of multi-indices, we have nonnegative integer indices. Thus generalized moment functions of rank $1$ are simply moment sequences. 
\end{rem}

\begin{rem}
Assume now that $(G, \cdot)$ is an Abelian group (not only a semigroup). 
For $\alpha=\mathbf{0}$ we have
\[
f_{ \mathbf{0}}(x \cdot y)=f_{\mathbf{0}}(x)\cdot f_{\mathbf{0}}(y) 
\]
for each $x, y$ in $G$, hence $f_{\mathbf{0}}=m$ is always an exponential, or identically zero. It can be proved by induction on the height of the multi-index $\alpha\in \mathbb{N}^{r}$ that if $f_{\mathbf{0}}$ is the identically zero function, then for all multi-index $\alpha$, the mapping $f_{\alpha}$ must be identically zero, too. 
\end{rem}

In a rather natural way, the above notions can be extended from complex-valued mappings to mappings whose range is a (commutative) ring. 
Indeed, if $(G, \cdot)$ is an Abelian semigroup and $Q$ is a commutative ring, $r$ is a positive integer, and $\alpha\in\mathbb{N}^r$ is a multi-index, then a function $f\colon G\to Q$ is a generalized moment function of rank $r$ and of order $N$, where $N=|\alpha|$, if for all multi-indices $\beta \in \mathbb{N}^{r}$ with $|\beta| \leq N$, there exists a function $f_{\beta}\colon G\to Q$ such that $f= f_{\alpha}$ and we have 
\begin{equation}
f_{\beta}(x\cdot y)=\sum_{\gamma\leq \beta} \binom{\beta}{\gamma} f_{\gamma}(x)f_{\beta-\gamma}(y)
\end{equation}
holds whenever $x,y\in G$ and for all multi-indices $\beta \in \mathbb{N}^{r}$ with $|\beta |\leq N$.

\begin{rem}\label{rem3}
 Using the above definition this means that if $N\geq 1$ and we consider $\mathscr{C}^{N}(\Omega)$ with the pointwise product of functions, then it will become an Abelian semigroup and we take $\mathscr{C}(\Omega)$ to be the range, then the sequence of mappings $(D^{\alpha})_{|\alpha|\leq N}$ forms a moment sequence of rank $r$. 
\end{rem}

\section{Characterizations of higher order differential operators}

The main aim of this paper is to investigate the following problem: 
Let $r$ be a positive integer, $N$ be a nonnegative integer and $\Omega \subset \mathbb{R}^{r}$ be a domain (i.e. a nonempty, open and connected set). Determine the mappings $T_{\alpha} \colon \mathscr{C}^{N}(\Omega)\to \mathscr{C}(\Omega)$, $\alpha \in \mathbb{N}^{r}, |\alpha|\leq N$ if they fulfill 
\begin{equation}\label{Eq_main}
 T_{\beta}(f\cdot g)=\sum_{\gamma\leq \beta} \binom{\beta}{\gamma} T_{\gamma}(f)T_{\beta-\gamma}(g)
\end{equation}
for all $f, g\in \mathscr{C}^{N}(\Omega)$ and for all multi-indices $\beta \in \mathbb{N}^{r}$, $|\beta|\leq N$.

Observe that if $\beta= \mathbf{0}=(0, \ldots, 0)$, then the above identity becomes 
\[
 T_{\mathbf{0}}(f\cdot g)= T_{\mathbf{0}}(f)\cdot T_{\mathbf{0}}(g) 
  \qquad 
  \left(f, g\in \mathscr{C}^{N}(\Omega)\right). 
\]
This means, that similarly to the group case, the first element of the sequence, i.e. $T_{\mathbf{0}}$  is an `exponential'. 

Recall again that if $(G,\cdot)$ is an Abelian \emph{group}, then a nonzero function $m\colon G\to \mathbb{C}$ is an exponential, if 
\[
m(x\cdot y)=m(x)m(y)
\] 
holds for all $x,y$ in $G$. In the case of this concept, the fact that the range of $m$ is the set of complex numbers plays a key role. Indeed, if $m$ is an exponential on the Abelian group $G$, then either $m$ is identically zero, or nowhere zero. At the same time, as we will see below, analogous statements are not true for mappings $T\colon \mathscr{C}^{N}(\Omega)\to \mathscr{C}(\Omega)$. 

The study of multiplicative maps between function spaces has quite extensive literature. Here we quote only two of them, but the interested reader can consult e.g. Artstein-Avidan--Faifman--Milman  \cite{ArtFaiMil12},  Milgram \cite{Mil49}, Mr\v{c}un \cite{Mrc05} and Mr\v{c}un--\v{S}emrl \cite{MrcSem07}.

A result from \cite{MrcSem07} concerning \emph{bijective} multiplicative mappings between the function spaces $\mathscr{C}(X)$ and $\mathscr{C}(Y)$ says that
if we are given compact Hausdorff spaces $X$ and $Y$, $\tau \colon Y\to X$ is a homeomorphism and $p\in \mathscr{C}(Y)$ is a positive function, then the mapping $\mathcal{M}\colon \mathscr{C}(X)\to \mathscr{C}(Y)$ defined by 
 \[
  \mathcal{M}(f)(x)= \left|f(\tau(x)) \right|^{p(x)}\cdot \mathrm{sgn}\left(f(\tau(x))\right)
  \qquad 
  \left(x\in Y, f\in \mathscr{C}(X)\right)
 \]
is a bijective and multiplicative map, i.e. we have 
\[
 \mathcal{M}(f\cdot g)(x)= \mathcal{M}(f)(x) \cdot \mathcal{M}(g)(x)
\]
for all $x\in Y$ and $f, g\in \mathscr{C}(X)$. 

In view of this, if $K\subset \mathbb{R}^{r}$ is a \emph{compact} set and $\tau \colon K \to K$ is a homeomorphism, then 
the mapping $\mathcal{M}\colon \mathscr{C}(K)\to \mathscr{C}(K)$ defined by 
 \[
  \mathcal{M}(f)(x)= \left|f(\tau(x)) \right|^{p(x)}\cdot \mathrm{sgn}\left(f(\tau(x))\right)
  \qquad 
  \left(x\in K, f\in \mathscr{C}(K)\right)
 \]
is a bijective and multiplicative map. Firstly observe that this is only one direction and not an `if and only if' statement. Further, in general, we intend to work on a \emph{domain} $\Omega \subset \mathbb{R}^{r}$ and we cannot a priori assume that the mapping in question is \emph{bijective}. 

A corollary of a result from Mr\v{c}un \cite{Mrc05} describes \emph{bijective} multiplicative self-mappings of $\mathscr{C}^{N}(\Omega)$, where $N$ is a fixed positive integer. 
 Let $N, r$ be a positive integers and $\Omega  \subset \mathbb{R}^{r}$ be a $\mathscr{C}^{N}$-manifold. Then for any multiplicative bijection $\mathcal{M}\colon \mathscr{C}^{N}(\Omega)\to \mathscr{C}^{N}(\Omega)$ there exists a unique $\mathscr{C}^{N}$-diffeomorphism $\tau\colon \Omega \to \Omega$ such that 
 \[
  \mathcal{M}(f)(x)= f(\tau(x)) 
  \qquad 
  \left(x\in \Omega, f\in \mathscr{C}^{N}(\Omega)\right)
 \]
holds.

In the cases we intend to study, unfortunately, the range of the mappings is not $\mathscr{C}^{N}(\Omega)$, but the much larger function space $\mathscr{C}(\Omega)$. In addition, in general, it cannot be guaranteed that the mapping $T_{\mathbf{0}}$ is bijective. However, without the assumption of bijectivity, we cannot expect to be able to describe the multiplicative mappings in these spaces. Thus, in this paper, we will determine the moment functions of the spaces in question in the case of some important multiplicative mappings.

\subsection{A non-bijective case}

Let $r$ be a positive integer, $N$ be nonnegative a integer, and $\Omega\subset \mathbb{R}^{r}$ be a domain. Then the mapping $T_{\mathbf{0}}\colon \mathscr{C}^{N}(\Omega)\to \mathscr{C}(\Omega)$ defined by 
\[
 T_{\mathbf{0}}(f)(x)= 1 
 \qquad 
 \left(x\in \Omega, f\in \mathscr{C}^{N}(\Omega)\right)
\]
is multiplicative (and non-bijective). Therefore, it can be suitable to generate a moment sequence. As we will see, this mapping generates a fairly trivial moment sequence.

\begin{thm}
 Let $r$ be a positive integer, $N$ be nonnegative a integer, and $\Omega\subset \mathbb{R}^{r}$ be a domain. Assume further that for all multi-indices $\alpha\in \mathbb{N}^{r}$, $|\alpha|\leq N$, we are given a mapping $T_{\alpha}\colon \mathscr{C}^{N}(\Omega)\to \mathscr{C}(\Omega)$ such that 
 \[
 T_{\mathbf{0}}(f)(x)= 1 
 \qquad 
 \left(x\in \Omega, f\in \mathscr{C}^{N}(\Omega)\right)
\]
and $(T_{\alpha})_{|\alpha|\leq N}$ forms a moment sequence of rank $r$ and of order $N$. Then for all multi-indices $\alpha\in \mathbb{N}^{r}$ with $\alpha\neq \mathbf{0}$ and $|\alpha|\leq N$ we have 
\[
 T_{\alpha}(f)(x)= 0
\]
for all $x\in \Omega$ and $f\in \mathscr{C}^{N}(\Omega)$. 
\end{thm}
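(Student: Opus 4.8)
The plan is to run an induction on the height $|\alpha|$, exploiting the single most useful test function available in the multiplicative semigroup $(\mathscr{C}^{N}(\Omega),\cdot)$, namely the identically zero function, which I will denote by $o$. The point is that $o$ is an absorbing element, $o\cdot g=o$ for every $g\in\mathscr{C}^{N}(\Omega)$, while the generating hypothesis forces $T_{\mathbf{0}}(o)(x)=1$ for every $x$ --- so, somewhat counterintuitively, the zero function is \emph{not} annihilated by $T_{\mathbf{0}}$. Combining these two facts through the moment relation \eqref{Eq_main} will collapse almost every term and leave exactly the identity we want.

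Concretely, suppose as an induction hypothesis that $T_{\gamma}\equiv 0$ for every multi-index $\gamma$ with $1\le|\gamma|<n$, and let $\alpha$ be any multi-index with $|\alpha|=n\ge 1$. For an arbitrary $g\in\mathscr{C}^{N}(\Omega)$ I would substitute $f=o$ into \eqref{Eq_main} with $\beta=\alpha$. The left-hand side is $T_{\alpha}(o\cdot g)=T_{\alpha}(o)$. On the right-hand side I isolate the two extreme terms $\gamma=\mathbf{0}$ and $\gamma=\alpha$, which, using $T_{\mathbf{0}}\equiv 1$, contribute $T_{\alpha}(g)$ and $T_{\alpha}(o)$ respectively; every remaining term has $\mathbf{0}\lneq\gamma\lneq\alpha$, hence $1\le|\gamma|<n$, so its factor $T_{\gamma}(o)$ vanishes by the induction hypothesis. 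What survives is the pointwise identity $T_{\alpha}(o)=T_{\alpha}(g)+T_{\alpha}(o)$ in $\mathscr{C}(\Omega)$, whence $T_{\alpha}(g)(x)=0$ for all $x\in\Omega$. As $g$ was arbitrary, $T_{\alpha}\equiv 0$, closing the induction.

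The base case $n=1$ is a special instance of the very same computation: when $|\alpha|=1$ there are no multi-indices strictly between $\mathbf{0}$ and $\alpha$, so the middle sum is empty and no induction hypothesis is needed. In effect there is no genuine obstacle here; the only thing one must be careful about is to recognize that the correct substitution is the absorbing element $o$ rather than, say, the constant function $1$, and to use that $T_{\mathbf{0}}$ equals $1$ even on $o$. I would also check at the outset that $o\in\mathscr{C}^{N}(\Omega)$ (it is, being smooth) and that the final cancellation is legitimate pointwise, since \eqref{Eq_main} is an equality of elements of $\mathscr{C}(\Omega)$.
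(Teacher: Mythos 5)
Your proposal is correct and follows essentially the same route as the paper: induction on the height $|\alpha|$, with the cross terms killed by the induction hypothesis and the zero function used as the absorbing element of the multiplicative semigroup. The only cosmetic difference is that you cancel $T_{\alpha}(o)$ from both sides directly, whereas the paper first derives the additive relation $T_{\alpha}(f\cdot g)=T_{\alpha}(f)+T_{\alpha}(g)$ and then separately establishes $T_{\alpha}(o)=0$ by setting $f=g=o$; both steps are equally valid.
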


\begin{proof}
 We prove the statement on induction of the height of the multi-index $\alpha\in \mathbb{N}^{r}$. 
 Accordingly, let $\alpha\in \mathbb{N}^{r}$ be an arbitrary multi-index with $|\alpha|=1$. 
 Then 
 \[
  T_{\alpha}(f\cdot g)= T_{\mathbf{0}}(f)T_{\alpha}(g)+T_{\alpha}(f)T_{\mathbf{0}}(g)
 \]
holds for all $f, g\in \mathscr{C}^{N}(\Omega)$. Since 
\[
 T_{\mathbf{0}}(f)(x)= 1 
 \qquad 
 \left(x\in \Omega, f\in \mathscr{C}^{N}(\Omega)\right), 
\]
this means that 
\[
  T_{\alpha}(f\cdot g)= T_{\alpha}(f)+T_{\alpha}(g)
 \]
all $f, g\in \mathscr{C}^{N}(\Omega)$. Let $f$ and $g$ be the identically zero functions on $\Omega$, we get that 
\[
 T_{\alpha}(0)=2T_{\alpha}(0), 
\]
so $T_{\alpha}(0)=0$. This however yields that 
\[
 T_{\alpha}(f\cdot 0)= T_{\alpha}(f)+T_{\alpha}(0)
\]
for all $f\in \mathscr{C}^{N}(\Omega)$. Thus 
\[
 T_{\alpha}(f)(x)=0
\]
for all $f\in \mathscr{C}^{N}(\Omega)$ and $x\in \Omega$. 

Let now $k \in \left\{ 1, \ldots, N-1\right\}$ be arbitrary, and suppose that 
\[
 T_{\beta}(f)(x)= 0 
 \qquad 
 \left(f\in \mathscr{C}^{N}(\Omega), x\in \Omega\right)
\]
holds for all multi-indices $\beta$ with $|\beta|\leq k$. Let further $\alpha\in \mathbb{N}^{r}$ be an arbitrary multi-index with $|\alpha|=k+1$. Then 
\begin{align*}
 T_{\alpha}(f \cdot g)&= 
 \sum_{\beta \leq \alpha}\binom{\alpha}{\beta}T_{\beta}(f)\cdot T_{\alpha-\beta}(g)\\
 &=  T_{\mathbf{0}}(f)T_{\alpha}(g)+T_{\alpha}(f)T_{\mathbf{0}}(g) 
 + \sum_{0 \lneq \beta \lneq \alpha} \binom{\alpha}{\beta}T_{\beta}(f)\cdot T_{\alpha-\beta}(g)\\
 &= T_{\alpha}(f)+T_{\alpha}(g)
\end{align*}
holds for all $f, g\in \mathscr{C}^{N}(\Omega)$. This is exactly the same equation that we solved above. Thus 
\[
 T_{\alpha}(f)(x)=0
\]
for all $f\in \mathscr{C}^{N}(\Omega)$ and $x\in \Omega$. 
\end{proof}

\subsection{A bijective case}

Let $r$ and $N$ be positive integers, $\Omega \subset \mathbb{R}^{r}$ be a $\mathscr{C}^{N}$-manifold and $\tau \colon \Omega \to \Omega$ be a $\mathscr{C}^{N}$-dif\-feo\-mor\-phism. Define $\tilde{T}_{\mathbf{0}}\colon \mathscr{C}^{N}(\Omega)\to \mathscr{C}(\Omega)$  through 
\[
 \tilde{T}_{\mathbf{0}}(f)(x)= f(\tau(x)) 
 \qquad 
 \left(x\in \Omega, f\in \mathscr{C}^{N}(\Omega)\right). 
\]
Then $\tilde{T}_{\mathbf{0}}$ is a multiplicative mapping. Thus it can be an appropriate candidate to generate a moment sequence on $\mathscr{C}^{N}(\Omega)$.

\begin{lem}
 Let $r$ and $N$ be positive integers, $\Omega \subset \mathbb{R}^{r}$ be a $\mathscr{C}^{N}$-manifold and $\tau \colon \Omega \to \Omega$ be a $\mathscr{C}^{N}$-diffeomorphism. 
 Further, let us consider the mappings $T_{\mathbf{0}},  \tilde{T}_{\mathbf{0}}\colon \mathscr{C}^{N}(\Omega)\to \mathscr{C}(\Omega)$ defined by 
 \[
T_{\mathbf{0}}(f)(x)= f(x) 
\qquad 
\text{and}
\qquad 
\tilde{T}_{\mathbf{0}} (f)(x)= f(\tau(x)) 
\qquad 
\left(x\in \Omega, f\in \mathscr{C}^{N}(\Omega)\right), 
 \]
 respectively. 
Then the following statements are equivalent: 
\begin{enumerate}[(i)]
 \item the sequence of mappings $T_{\alpha}\colon \mathscr{C}^{N}(\Omega)\to \mathscr{C}(\Omega)$, $\alpha\in \mathbb{N}^{r}, |\alpha|\leq N$ is a moment sequence generated by $T_{\mathbf{0}}$
 \item the sequence of mappings $\tilde{T}_{\alpha}\colon \mathscr{C}^{N}(\Omega)\to \mathscr{C}(\Omega)$, $\alpha\in \mathbb{N}^{r}, |\alpha|\leq N$ is a moment sequence generated by $\tilde{T}_{\mathbf{0}}$
\end{enumerate}
\end{lem}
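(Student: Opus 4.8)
The plan is to route both implications through the composition operator induced by $\tau$. Define $C_{\tau}\colon \mathscr{C}(\Omega)\to\mathscr{C}(\Omega)$ by $(C_{\tau}h)(x)=h(\tau(x))$. Since composing with the fixed continuous map $\tau$ distributes over pointwise addition, scalar multiplication and pointwise products, $C_{\tau}$ is a unital ring homomorphism of $\mathscr{C}(\Omega)$; because $\tau$ is a $\mathscr{C}^{N}$-diffeomorphism, $\tau^{-1}$ is again one and $C_{\tau^{-1}}$ is a two-sided inverse of $C_{\tau}$, so $C_{\tau}$ is in fact a ring automorphism. Two elementary identities will drive the argument: $\tilde{T}_{\mathbf{0}}=C_{\tau}\circ T_{\mathbf{0}}$ (which is exactly the defining relation of $\tilde{T}_{\mathbf{0}}$), and the link $\tilde{T}_{\alpha}=C_{\tau}\circ T_{\alpha}$ between the two families, i.e. $\tilde{T}_{\alpha}(f)=T_{\alpha}(f)\circ\tau$.

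For the implication (i)$\Rightarrow$(ii), I would start from the moment identity satisfied by $(T_{\alpha})$ and apply $C_{\tau}$ to both sides for each multi-index $\beta$ with $|\beta|\leq N$. On the left this produces $C_{\tau}(T_{\beta}(f\cdot g))=\tilde{T}_{\beta}(f\cdot g)$. On the right, using that $C_{\tau}$ is linear and multiplicative, I can pull it inside the finite sum and across each product $T_{\gamma}(f)\,T_{\beta-\gamma}(g)$, turning the right-hand side into
\[
\sum_{\gamma\leq\beta}\binom{\beta}{\gamma}\,C_{\tau}(T_{\gamma}(f))\,C_{\tau}(T_{\beta-\gamma}(g))
=\sum_{\gamma\leq\beta}\binom{\beta}{\gamma}\,\tilde{T}_{\gamma}(f)\,\tilde{T}_{\beta-\gamma}(g).
\]
This is precisely the moment identity for $(\tilde{T}_{\alpha})$, with generating function $\tilde{T}_{\mathbf{0}}$.

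The converse (ii)$\Rightarrow$(i) is symmetric: applying the inverse homomorphism $C_{\tau^{-1}}$ to the moment identity for $(\tilde{T}_{\alpha})$ and using $T_{\alpha}=C_{\tau^{-1}}\circ\tilde{T}_{\alpha}$ together with $T_{\mathbf{0}}=C_{\tau^{-1}}\circ\tilde{T}_{\mathbf{0}}$ recovers the moment identity for $(T_{\alpha})$. I do not expect a genuine obstacle here; the whole statement reduces to the fact that conjugation by the algebra automorphism $C_{\tau}$ preserves the system of moment equations. The only points needing a line of care are verifying that $C_{\tau}$ really respects both the additive and multiplicative structure of $\mathscr{C}(\Omega)$ (so that it commutes with the binomial sum and with the products on the right-hand side), checking that $\tilde{T}_{\alpha}$ indeed lands in $\mathscr{C}(\Omega)$ since $T_{\alpha}(f)\circ\tau$ is continuous, and noting that the correspondence $T_{\alpha}\leftrightarrow\tilde{T}_{\alpha}$ is a bijection, which is exactly where the invertibility of $\tau$ enters.
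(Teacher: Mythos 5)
Your proof is correct and follows essentially the same route as the paper: the paper also links the two families by $\tilde{T}_{\alpha}(f)(x)=T_{\alpha}(f)(\tau(x))$ and transports the moment identity by evaluating it at $\tau(x)$ (respectively at $x=\tau(y)$ for the converse), which is exactly your conjugation by $C_{\tau}$ written pointwise.
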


\begin{proof}
 Let $r$ and $N$ be positive integers, $\Omega \subset \mathbb{R}^{r}$ be a $\mathscr{C}^{N}$-manifold and $\tau \colon \Omega \to \Omega$ be a $\mathscr{C}^{N}$-diffeomorphism. 
 Further, let is consider the mappings $T_{\mathbf{0}},  \tilde{T}_{\mathbf{0}}\colon \mathscr{C}^{N}(\Omega)\to \mathscr{C}(\Omega)$ defined by 
 \[
T_{\mathbf{0}}(f)(x)= f(x) 
\qquad 
\text{and}
\qquad 
\tilde{T}_{\mathbf{0}} (f)(x)= f(\tau(x)) 
\qquad 
\left(x\in \Omega, f\in \mathscr{C}^{N}(\Omega)\right), 
 \]
respectively. 

To prove the direction (i)$\Rightarrow$(ii), assume that the sequence of mappings $T_{\alpha}\colon \mathscr{C}^{N}(\Omega)\to \mathscr{C}(\Omega)$, $\alpha\in \mathbb{N}^{r}, |\alpha|\leq N$ is a moment sequence generated by $T_{\mathbf{0}}$. 
This means that for all $\alpha \in \mathbb{N}^{r}$, $|\alpha|\leq N$ we have 
\[
 T_{\alpha}(f \cdot g)(x)= \sum_{\beta \leq \alpha}\binom{\alpha}{\beta}T_{\beta}(f)(x)\cdot T_{\alpha-\beta}(g)(x)
\]
for all $f, g\in \mathscr{C}^{N}(\Omega)$ and $x\in \Omega$. Thus we also have 
\[
 T_{\alpha}(f \cdot g)(\tau(x))= \sum_{\beta \leq \alpha}\binom{\alpha}{\beta}T_{\beta}(f)(\tau(x))\cdot T_{\alpha-\beta}(g)(\tau(x))
 \qquad 
 \left(f, g\in \mathscr{C}^{N}(\Omega), x\in \Omega\right). 
\]
For all multi-indices $\alpha\in \mathbb{N}^{r}$, $|\alpha|\leq N$, define the mapping $\tilde{T}_{\alpha}\colon \mathscr{C}^{N}(\Omega)\to \mathscr{C}(\Omega)$ by 
\[
 \tilde{T}_{\alpha}(f)(x)= T_{\alpha}(f)(\tau(x))
 \qquad 
 \left(f\in \mathscr{C}^{N}(\Omega), x\in \Omega\right)
\]
to deduce that 
\[
 \tilde{T}_{\alpha}(f \cdot g)(x)= \sum_{\beta \leq \alpha}\binom{\alpha}{\beta}\tilde{T}_{\beta}(f)(x)\cdot \tilde{T}_{\alpha-\beta}(g)(x)
\]
for all $f, g\in \mathscr{C}^{N}(\Omega)$ and $x\in \Omega$. Thus the sequence of mappings $(\tilde{T}_{\alpha})_{|\alpha|\leq N}$ is a moment sequence of rank $r$ generated by $\tilde{T}_{0}$. 

The proof of the implication (ii)$\Rightarrow$(i) is analogous. It is enough to consider a point $x=\tau(y)$ with arbitrary $y \in \Omega$ and use the fact that $\tau$ is a diffeomorphism. 
\end{proof}

As we saw above, if $r$ and $N$ are positive integers, $\Omega \subset \mathbb{R}^{r}$ is a $\mathscr{C}^{N}$-manifold and $\tau \colon \Omega \to \Omega$ is a $\mathscr{C}^{N}$-dif\-feo\-mor\-phism, then the mapping $\tilde{T}_{\mathbf{0}}\colon \mathscr{C}^{N}(\Omega)\to \mathscr{C}(\Omega)$  defined by  
\[
 \tilde{T}_{\mathbf{0}}(f)(x)= f(\tau(x)) 
 \qquad 
 \left(x\in \Omega, f\in \mathscr{C}^{N}(\Omega)\right), 
\]
is a multiplicative mapping. Thus it can be an appropriate candidate to generate a moment sequence on $\mathscr{C}^{N}(\Omega)$. 
Nevertheless, the previous lemma says that instead of multiplicative mappings of this form, it suffices to consider the identity mapping.
Accordingly, below we will describe moment sequences generated by the identity mapping. Further, observe that while describing the solutions of equation \eqref{Eq_main}, not only the generator, i.e., the operator $T_{\mathbf{0}}$, but also the domain $\mathscr{C}^{N}(\Omega)$ can play a crucial role. In the second part of this section, we focus on the largest possible domain, that is, we will work on $\mathscr{C}(\Omega)$.

During the proof of Theorem \ref{thm2} we will use a corollary of \cite[Theorem 3.5]{KonMil18} and also \cite[Theorem 7.1]{KonMil18} which are the following statements. Before stating these results, however, we need two more notions from the theory of operator relations.

\begin{dfn}
 Let $k$ be a nonnegative integer, $r$ be a positive integer and $\Omega \subset \mathbb{R}^{r}$ be an open set. An operator $A\colon \mathscr{C}^{k}(\Omega)\to \mathscr{C}(\Omega)$ is \emph{non-degenerate} if for each nonvoid open subset $U\subset \Omega$ and all $x\in U$, there exist functions $g_{1}, g_{2}\in \mathscr{C}^{k}(\Omega)$ with supports in $U$ such that the vectors $(g_{i}(x), A g_{i}(x))\in \mathbb{R}^{2}$, $i=1, 2$ are linearly independent in $\mathbb{R}^{2}$. 
\end{dfn}

\begin{dfn}
 Let $k$ and $r$ be positive integers with $k\geq 2$ and $\Omega\subset \mathbb{R}^{r}$ be an open set. We say that the operator $A\colon \mathscr{C}^{k}(\Omega)\to \mathscr{C}(\Omega)$ \emph{depends non-trivially on the derivative} if there exists $x_0\in \Omega$ and there are functions $f_{1}, f_{2}\in \mathscr{C}^{k}(\Omega)$ such that 
 \[
  f_{1}(x_0)= f_{2}(x_0) \quad \text{and} \quad A f_{1}(x_0)\neq A f_{2}(x_0)
 \]
holds. 
\end{dfn}

\begin{prop}\label{prop_KonMil}
 Let $r$ be a positive integer and $\Omega\subset \mathbb{R}^{r}$ be a domain. Suppose that the operator $T\colon \mathscr{C}(\Omega)\to \mathscr{C}(\Omega)$ satisfies the Leibniz rule, i.e., 
 \[
  T(f\cdot g)= f\cdot T(g)+T(f)\cdot g 
  \qquad 
  \left(f, g\in \mathscr{C}(\Omega)\right). 
 \]
Then there exists a function $c\in \mathscr{C}(\Omega)$ such that for all $f\in \mathscr{C}(\Omega)$ and $x\in \Omega$ 
\[
 T(f)(x)= c(x)\cdot f(x) \cdot \ln \left(\left|f(x)\right|\right). 
\]
Conversely, any such map $T$ satisfies the Leibniz rule. 
\end{prop}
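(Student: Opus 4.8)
The plan is to reduce the operator identity to a scalar functional equation by first showing that $T$ is \emph{pointwise}, i.e.\ that $T(f)(x)$ depends only on the value $f(x)$, and then to solve the resulting equation for each fixed $x$. The guiding observation is that the entropy function $d(t)=t\ln|t|$ (with $d(0):=0$) satisfies the real identity $d(ab)=a\,d(b)+b\,d(a)$, so once pointwise dependence is in force the claimed form is forced by this scalar relation. The converse direction is a direct substitution: writing $T(f)(x)=c(x)f(x)\ln|f(x)|$ and using $\ln|f(x)g(x)|=\ln|f(x)|+\ln|g(x)|$ one checks at once that $T(f\cdot g)=f\cdot T(g)+T(f)\cdot g$.

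For the forward direction I would first record the trivial normalizations. Taking $f=g\equiv 1$ gives $T(\mathbf 1)=2T(\mathbf 1)$, hence $T(\mathbf 1)=0$; taking $f=g\equiv 0$ gives $T(0)=0$. Next I would establish two locality statements. \textbf{Germ locality:} if $f$ vanishes on a neighbourhood of $x_0$, choose $\psi\in\mathscr C(\Omega)$ supported in that neighbourhood with $\psi(x_0)=1$; then $f\psi\equiv 0$, and evaluating $0=T(f\psi)=f\,T(\psi)+T(f)\,\psi$ at $x_0$ yields $T(f)(x_0)=0$. Comparing $f_1\psi$ and $f_2\psi$ for two functions agreeing near $x_0$ then gives that $T(f)(x_0)$ depends only on the germ of $f$ at $x_0$. \textbf{Vanishing values:} if $f(x_0)=0$, factor $f=\bigl(\mathrm{sgn}(f)\,|f|^{1/2}\bigr)\cdot |f|^{1/2}$ into two continuous factors both vanishing at $x_0$; the Leibniz rule then forces $T(f)(x_0)=0$, matching the predicted value $c(x_0)\cdot 0\cdot\ln 0$.

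It remains to pass from germ locality to genuine pointwise dependence at points where $f(x_0)=a\neq 0$. Here I would use the factorization $f_2=f_1\cdot(f_2/f_1)$: since $f_1,f_2$ are nonzero near $x_0$, the quotient $u=f_2/f_1$ is continuous with $u(x_0)=1$, and evaluating the Leibniz rule at $x_0$ gives $T(f_2)(x_0)-T(f_1)(x_0)=a\,T(u)(x_0)$. Thus pointwise dependence reduces to the single assertion that $T(u)(x_0)=0$ whenever $u(x_0)=1$. \textbf{This is the main obstacle:} because $T$ is not assumed linear or continuous, there is no elementary squeezing argument (the identities $T(u^{2^n})(x_0)=2^{n}\,T(u)(x_0)$ obtained from repeated squaring give no usable bound), and the rigidity must be extracted from the richness of $\mathscr C(\Omega)$ together with the fact that every output $T(f)$ is continuous. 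This is precisely the localization phenomenon supplied by the cited results of K\"onig and Milman.

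Granting pointwise dependence, I would finish as follows. Define $D_x(a)$ to be the common value $T(f)(x)$ over all $f$ with $f(x)=a$; the Leibniz rule becomes the scalar equation $D_x(ab)=a\,D_x(b)+b\,D_x(a)$. On $a,b>0$ the substitution $a=e^{s}$, $b=e^{t}$ and $E(t):=D_x(e^{t})/e^{t}$ turn this into Cauchy's equation $E(s+t)=E(s)+E(t)$; the regularity needed to conclude $E(t)=c(x)\,t$ comes from the continuity of the outputs $T(f)$ evaluated on test functions sweeping an interval of values, so that $D_x(a)=c(x)\,a\ln a$ for $a>0$. Signs are handled by $D_x(1)=0$ (from $T(\mathbf 1)=0$), which forces $D_x(-1)=0$ and hence $D_x(-a)=-D_x(a)$, yielding $D_x(a)=c(x)\,a\ln|a|$ for all $a$; the value $a=0$ is covered by the vanishing-values step. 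Finally $c(x)=T(\mathbf e)(x)/e$, where $\mathbf e$ denotes the constant function $e$, exhibits $c$ as a continuous function, completing the proof.
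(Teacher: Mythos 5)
The paper does not prove this proposition at all: it is imported verbatim as a corollary of Theorem~3.5 of K\"onig--Milman \cite{KonMil18}, so there is no in-paper argument to compare yours against. Judged as a self-contained proof, your proposal has a genuine gap, and you say so yourself: the entire difficulty of the theorem is concentrated in the claim that $T(u)(x_0)=0$ whenever $u(x_0)=1$ (equivalently, that $T(f)(x_0)$ depends only on $f(x_0)$), and at exactly that point you defer to ``the localization phenomenon supplied by the cited results of K\"onig and Milman.'' Invoking the result to be proved is not a proof of it; note that germ locality alone does not force this, since the group of germs with value $1$ at $x_0$ admits plenty of nonzero homomorphisms into $(\mathbb{R},+)$, so the rigidity really must come from the requirement that every output $T(f)$ be continuous on all of $\Omega$, and that is the nontrivial part of \cite{KonMil18}. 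The surrounding steps you do carry out are correct and are indeed the standard opening moves --- $T(\mathbf{1})=T(0)=0$, germ locality via a cutoff $\psi$, the factorization $f=(\mathrm{sgn}(f)\,|f|^{1/2})\cdot|f|^{1/2}$ at zeros, and the reduction of pointwise dependence to the value-$1$ case (though for the quotient $u=f_2/f_1$ to lie in $\mathscr{C}(\Omega)$ you must first replace $f_1,f_2$ by globally nonvanishing functions with the same germs at $x_0$, which germ locality permits).

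A second, quieter gap sits in your last paragraph. Even granting pointwise dependence, you obtain only Cauchy's equation $E(s+t)=E(s)+E(t)$ for $E(t)=D_x(e^t)/e^t$, and without some regularity this has pathological nonlinear solutions. Your stated source of regularity --- continuity of the outputs $T(f)$ on test functions sweeping an interval of values --- does not obviously suffice: continuity of $y\mapsto T(f)(y)$ constrains the two-variable object $D_y(f(y))$, not the map $a\mapsto D_{x_0}(a)$ at a fixed $x_0$, and extracting continuity or measurability of the latter from the former requires an actual argument, again essentially the one in \cite{KonMil18}. So the honest summary is: your outline correctly identifies where the theorem is hard, but both hard points are left to the citation. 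Since the paper itself simply cites \cite{KonMil18} for the whole statement, the cleanest course is either to do the same or to genuinely reproduce the K\"onig--Milman localization argument rather than gesture at it.
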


\begin{prop}\label{prop_KonMil2}
 Let $r$ be a positive integer, $k$ be a nonnegative integer and $\Omega \subset\mathbb{R}^{r}$ be a domain. Assume that $T, A\colon \mathscr{C}^{k}(\Omega)\to \mathscr{C}(\Omega)$ satisfy 
 \[
  T(f\cdot g)= T(f)\cdot g+f\cdot T(g)+2A(f)\cdot A(g) 
  \qquad 
  \left(f, g\in \mathscr{C}^{k}(\Omega)\right)
 \]
and that in case $k\geq 2$ the mapping $A$ is non-degenerate and depends non-trivially on the derivative. 
Then there are continuous functions $a\colon \Omega \to \mathbb{R}$ and $b, c\colon \Omega \to \mathbb{R}^{r}$ such that we have 
\[
 \begin{array}{rcl}
  T(f)(x)&=& \langle f''(x)c(x), c(x)\rangle +R(f)(x)\\[2mm]
  A(f)(x)&=& \langle f'(x), c(x)\rangle
 \end{array}
\qquad 
\left(f\in \mathscr{C}^{k}(\Omega), x\in \Omega\right), 
\]
where 
\[
 R(f)(x)= \langle f'(x), b(x)\rangle +a(x)f(x)\ln \left(|f(x)|\right) 
 \qquad 
 \left(f\in \mathscr{C}^{k}(\Omega)\right). 
\]
If $k=1$, then necessarily $c\equiv 0$. Further, if $k=0$, then necessarily $b\equiv 0$ and $c\equiv 0$.

Conversely, these operators satisfy the above second-order Leibniz rule. 
\end{prop}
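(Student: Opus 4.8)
Since this statement is quoted from \cite{KonMil18}, my plan is to reconstruct its proof along the lines of the König--Milman method for Leibniz-type operator relations, using Proposition~\ref{prop_KonMil} as the engine for the first-order step. The guiding idea is to read the identity as a \emph{second-order Leibniz rule} and to split it into a first-order part carried by $A$ and a second-order part carried by $T$. Concretely, working in the truncated ring $\mathscr{C}(\Omega)[t]/(t^{3})$ and setting $\Phi(f)=f+A(f)\,t+\tfrac12 T(f)\,t^{2}$, one sees that the displayed relation is exactly the $t^{2}$-coefficient of the multiplicativity $\Phi(fg)=\Phi(f)\Phi(g)$, the $t^{1}$-coefficient being the derivation law $A(fg)=A(f)g+fA(g)$. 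This motivates the strategy: first pin down $A$, then recover $T$ by ``completing the square''.

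First I would run the elementary normalisations. Putting $g\equiv\mathbf 1$ yields $0=f\,T(\mathbf 1)+2A(f)A(\mathbf 1)$, and then $f=g\equiv\mathbf 1$ gives $T(\mathbf 1)=-2A(\mathbf 1)^{2}$; combining these produces $A(f)(x)A(\mathbf 1)(x)=f(x)A(\mathbf 1)(x)^{2}$ for all $f$ and $x$. Hence on the open set $\{A(\mathbf 1)\neq 0\}$ one has $A(f)(x)=f(x)A(\mathbf 1)(x)$, so there $A$ is a pure multiplication operator that ignores the derivative. The standing hypotheses (non-degeneracy and, for $k\geq2$, non-trivial dependence on the derivative) are precisely what should exclude this behaviour, so I expect to conclude $A(\mathbf 1)\equiv0$ and $T(\mathbf 1)\equiv0$.

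Next comes the technical core: following König--Milman I would prove that $T$ and $A$ are \emph{local}, i.e.\ $A(f)(x)$ and $T(f)(x)$ depend only on the $k$-jet of $f$ at $x$. This is where non-degeneracy is indispensable, since it supplies enough compactly supported test functions to replace $f$ by functions agreeing with it only near $x$; once locality holds, the relation becomes a pointwise functional equation in the jet variables $\bigl(f(x),f'(x),f''(x)\bigr)$. Restricting to functions that are locally constant, respectively affine, then forces the first-order part $A$ to obey the genuine derivation law, and the derivation description (Proposition~\ref{prop_KonMil} together with its $\mathscr{C}^{k}$ counterpart) gives $A(f)(x)=\langle f'(x),c(x)\rangle+a(x)f(x)\ln(|f(x)|)$. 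The logarithmic part is then killed by the requirement that $T$ take values in $\mathscr{C}(\Omega)$: a simultaneously nonzero directional and logarithmic part of $A$ would force, through the cross term $2A(f)A(g)$, a contribution of type $f'(x)\ln(|f(x)|)$ inside $T(f)(x)$, which is unbounded near the zeros of $f$ and hence not continuous. This leaves $A(f)(x)=\langle f'(x),c(x)\rangle$.

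Finally I would recover $T$. Setting $Q(f)(x)=\langle f''(x)c(x),c(x)\rangle$ and using the Hessian product rule $\langle (fg)''c,c\rangle=g\langle f''c,c\rangle+f\langle g''c,c\rangle+2\langle f',c\rangle\langle g',c\rangle$, the cross term $2A(f)A(g)=2\langle f',c\rangle\langle g',c\rangle$ cancels, so $S:=T-Q$ satisfies the plain Leibniz rule; Proposition~\ref{prop_KonMil} (in the $\mathscr{C}^{k}$ form) then yields $S(f)(x)=\langle f'(x),b(x)\rangle+a(x)f(x)\ln(|f(x)|)=R(f)(x)$, which is exactly the asserted shape of $T$. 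The low-smoothness degeneracies are then automatic because the relevant jet entries are unavailable: no $f''$ exists when $k=1$ (forcing $c\equiv0$) and neither $f'$ nor $f''$ exists when $k=0$ (forcing $b\equiv0$ and $c\equiv0$). The converse is the routine check that the exhibited operators satisfy the relation, again via the Hessian product rule. I expect the main obstacle to be the localisation step together with globalising the pointwise conclusions over the connected domain $\Omega$: upgrading the one-point hypothesis (``$A$ depends non-trivially on the derivative at some $x_{0}$'') into the uniform conclusion $A(\mathbf 1)\equiv0$ and into a single continuous vector field $c$ valid on all of $\Omega$ is the delicate point, and is exactly where non-degeneracy does its work.
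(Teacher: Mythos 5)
The paper does not prove this proposition at all: it is imported verbatim from K\"onig--Milman \cite{KonMil18} (Theorem 7.1 there), so there is no internal proof to compare against and I can only assess your reconstruction on its own terms. As a roadmap it follows the right general strategy (normalize at $\mathbf{1}$, localize, classify $A$, then subtract $\langle f''c,c\rangle$ and reduce to a first-order Leibniz rule handled by Proposition~\ref{prop_KonMil}), but it has a genuine gap at its central step: you never derive a functional equation satisfied by $A$ alone. The hypothesis constrains only $T(f\cdot g)$; ``restricting to locally constant, respectively affine, functions'' does not by itself force $A$ to obey the derivation law, and the truncated-ring picture only says that the derivation law would be the $t^{1}$-coefficient \emph{if} $\Phi$ were multiplicative, which is not among the hypotheses. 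The missing idea is the associativity trick: expanding $T(fgh)$ in two ways and cancelling yields $B(f,g)A(h)=A(f)B(g,h)$ for the Leibniz defect $B(f,g)=A(fg)-A(f)g-fA(g)$, whence $B(f,g)=\lambda A(f)A(g)$ for some function $\lambda$, i.e.\ $A$ satisfies a Leibniz rule of the second kind; one must then still rule out $\lambda\neq 0$ and the entropy solution $a f\ln(|f|)$ inside $A$ before $A(f)=\langle f',c\rangle$ is available. Your continuity heuristic for killing the entropy part of $A$ is also not conclusive as stated: the cross term $2a^{2}fg\ln(|f|)\ln(|g|)$ can in principle be absorbed by a term $a^{2}f(\ln|f|)^{2}$ in $T$, which \emph{is} continuous, so the contradiction does not follow merely from asserting that a discontinuous term ``must appear in $T(f)$.''

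Two further points. First, your normalization step concludes $A(\mathbf{1})\equiv 0$ from non-degeneracy, but the proposition as restated in this paper assumes non-degeneracy only for $k\geq 2$; for $k\leq 1$ your own computation actually exhibits the solution $A(f)=a_{0}f$, $T(f)=af\ln(|f|)-2a_{0}^{2}f$ with $a_{0}\not\equiv 0$, which is not of the stated form, so you would need K\"onig--Milman's original hypotheses (non-degeneracy for every $k$) for this step to close. Second, the localization step, which you correctly identify as the technical core and the place where non-degeneracy does its work, is named but not executed. As written, the proposal is a plausible plan for reconstructing the cited proof rather than a proof.
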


Our main result for operators defined on $\mathscr{C}(\Omega)$ is the following theorem.

\begin{thm}\label{thm2}
 Let $r$ and $N$ be positive integers, $\Omega \subset \mathbb{R}^{r}$ be a domain and assume that for all multi-indices $\alpha\in \mathbb{N}^{r}$, with $|\alpha|\leq N$ we are given a mapping $T_{\alpha}\colon \mathscr{C}(\Omega)\to \mathscr{C}(\Omega)$ such that 
 $T_{\mathbf{0}}$ is the identity mapping and for all multi-indices $\alpha \in \mathbb{N}^{r}$ with $0\neq |\alpha|\leq N$ we have 
 \begin{equation}\label{moment_C}
  T_{\alpha}(f\cdot g)= \sum_{\beta \leq \alpha} \binom{\alpha}{\beta}T_{\beta}(f)\cdot T_{\alpha-\beta}(g)
 \end{equation}
 for all $f, g\in \mathscr{C}(\Omega)$. Then  there exist a family  of functions $\{ c_{\alpha}\in \mathscr{C}(\Omega) :  0\neq |\alpha | \leq N \}$ such that
\begin{equation}\label{c}
 \left[\sum_{\mathbf{0} \lneq \beta \lneq \alpha}\binom{\alpha}{\beta} c_{\beta}(x)\cdot c_{\alpha-\beta }(x)\right] =0
\end{equation}
 and
\begin{equation}\label{f}
  T_{\alpha}(f)(x)= c_{\alpha}(x)f(x)\ln(|f(x)|) 
  \qquad 
  \left(x\in \Omega, f\in \mathscr{C}(\Omega),  0\neq |\alpha | \leq N \right). 
\end{equation}
And also conversely, if $T_{\mathbf{0}}$ is the identity mapping on $\mathscr{C}(\Omega)$, we are given a family of functions that satisfies \eqref{c} and we define the mappings $T_{\alpha}$ on $\mathscr{C}(\Omega)$ by the formula \eqref{f}, then they satisfy equation \eqref{moment_C} for all multi-indices $\alpha$ such that  $0\neq |\alpha | \leq N$. 
\end{thm}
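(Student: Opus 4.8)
The plan is to induct on the height $|\alpha|$, at each step stripping the two extreme summands ($\beta=\mathbf 0$ and $\beta=\alpha$) from \eqref{moment_C} so that the residual identity is a (second-order) Leibniz rule to which Propositions \ref{prop_KonMil} and \ref{prop_KonMil2} apply. Throughout write $\Phi(f)(x)=f(x)\ln\!\left(|f(x)|\right)$, with the convention $0\cdot\ln 0=0$ so that $\Phi(f)\in\mathscr{C}(\Omega)$ for every $f\in\mathscr{C}(\Omega)$; the basic identity $\Phi(fg)=f\,\Phi(g)+g\,\Phi(f)$ follows from $\ln|fg|=\ln|f|+\ln|g|$. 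For $|\alpha|=1$ the only indices below $\alpha$ are $\mathbf 0$ and $\alpha$, so \eqref{moment_C} with $T_{\mathbf 0}=\mathrm{id}$ reduces to the Leibniz rule $T_\alpha(fg)=f\,T_\alpha(g)+T_\alpha(f)\,g$; Proposition \ref{prop_KonMil} supplies $c_\alpha\in\mathscr{C}(\Omega)$ with $T_\alpha=c_\alpha\,\Phi$, which is \eqref{f}, while \eqref{c} holds vacuously because no $\beta$ satisfies $\mathbf 0\lneq\beta\lneq\alpha$.

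\textbf{Inductive step.} Assume \eqref{f} for all multi-indices of height at most $k$ (where $1\le k\le N-1$) and let $|\alpha|=k+1$. Isolating $\beta=\mathbf 0$ and $\beta=\alpha$ in \eqref{moment_C} gives
\[
 T_\alpha(fg)=f\,T_\alpha(g)+g\,T_\alpha(f)+\sum_{\mathbf 0\lneq\beta\lneq\alpha}\binom{\alpha}{\beta}T_\beta(f)\,T_{\alpha-\beta}(g).
\]
For each $\beta$ with $\mathbf 0\lneq\beta\lneq\alpha$ both $\beta$ and $\alpha-\beta$ are nonzero of height $\le k$, so the induction hypothesis turns $T_\beta(f)\,T_{\alpha-\beta}(g)$ into $c_\beta c_{\alpha-\beta}\,\Phi(f)\Phi(g)$. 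Writing $C_\alpha=\sum_{\mathbf 0\lneq\beta\lneq\alpha}\binom{\alpha}{\beta}c_\beta c_{\alpha-\beta}$ we obtain
\[
 T_\alpha(fg)=f\,T_\alpha(g)+g\,T_\alpha(f)+C_\alpha\,\Phi(f)\,\Phi(g),
\]
which is exactly the second-order Leibniz rule of Proposition \ref{prop_KonMil2} with $A=\sqrt{C_\alpha/2}\,\Phi$, since then $2A(f)A(g)=C_\alpha\Phi(f)\Phi(g)$. The $k=0$ clause of that proposition forces $A\equiv 0$, hence $C_\alpha\equiv 0$ --- this is \eqref{c} --- after which the surviving plain Leibniz rule and Proposition \ref{prop_KonMil} yield $c_\alpha\in\mathscr{C}(\Omega)$ with $T_\alpha=c_\alpha\Phi$, closing the induction.

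\textbf{Main obstacle and converse.} The delicate point is the legitimacy of the last application of Proposition \ref{prop_KonMil2}: it requires a genuine real operator $A\colon\mathscr{C}(\Omega)\to\mathscr{C}(\Omega)$, whereas $\sqrt{C_\alpha/2}$ is real only where $C_\alpha\ge 0$, and $C_\alpha$ (a convolution of lower-order coefficients) need not have a constant sign. I would dispose of this by examining the open sets $\{C_\alpha>0\}$ and $\{C_\alpha<0\}$ separately: on (a component of) the former one applies the proposition directly, and on the latter one applies it to the pair $\bigl(-T_\alpha,\ \sqrt{-C_\alpha/2}\,\Phi\bigr)$, negating $T_\alpha$ so that the $-2A(f)A(g)$ term is restored to $+2A(f)A(g)$; in both cases $A\equiv 0$ forces $C_\alpha=0$ there, so both sets are empty. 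This sign bookkeeping, together with the accompanying locality needed to restrict the operators to these open sets, is where the real work lies. The converse is a direct substitution: inserting \eqref{f} into the right-hand side of \eqref{moment_C}, the extreme terms recombine through $\Phi(fg)=f\Phi(g)+g\Phi(f)$ into $c_\alpha\Phi(fg)=T_\alpha(fg)$, while the inner sum equals $C_\alpha\,\Phi(f)\Phi(g)$ and vanishes by \eqref{c}.
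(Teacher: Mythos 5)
Your proof follows exactly the same route as the paper's: induction on $|\alpha|$, Proposition \ref{prop_KonMil} for the base case, and Proposition \ref{prop_KonMil2} with $k=0$ in the inductive step after collapsing the interior of the sum to $C_\alpha\,\Phi(f)\Phi(g)$ with $\Phi(f)=f\ln|f|$. You also correctly flag the one point the paper passes over in silence, namely that $C_\alpha\Phi(f)\Phi(g)$ can be written as $2A(f)A(g)$ with a real-valued operator $A$ only where $C_\alpha\geq 0$; your sign-splitting fix is reasonable in spirit but incomplete, because restricting $T_\alpha$ to the open sets $\{C_\alpha>0\}$ and $\{C_\alpha<0\}$ presupposes a locality property of $T_\alpha$ (that $T_\alpha(f)$ on $U$ depends only on the restriction of $f$ to $U$) which neither you nor the paper establishes.

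There is, however, a more serious obstruction that sinks your argument and the paper's at the very same step. The decisive move is ``the $k=0$ clause of Proposition \ref{prop_KonMil2} forces $A\equiv 0$, hence $C_\alpha\equiv 0$.'' But Proposition \ref{prop_KonMil2} cannot be correct as stated for $k=0$: the pair $A(f)=f\ln|f|$, $T(f)=f\ln^{2}|f|$ (both well defined from $\mathscr{C}(\Omega)$ to $\mathscr{C}(\Omega)$ with the convention $0\cdot\ln 0=0\cdot\ln^{2}0=0$) satisfies
\[
T(f\cdot g)=f\,T(g)+g\,T(f)+2A(f)A(g),
\]
since $\ln^{2}|fg|=\ln^{2}|f|+2\ln|f|\ln|g|+\ln^{2}|g|$, and yet $A\not\equiv 0$. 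This is the ``entropy'' branch of K\"onig--Milman's classification, which their non-degeneracy and derivative-dependence hypotheses are designed to exclude and which is unavoidable precisely when $k=0$. Consequently the inductive step cannot conclude $C_\alpha\equiv 0$, and in fact the theorem itself appears to fail: for $r=1$, $N=2$ take $T_{0}=\mathrm{id}$, $T_{1}(f)=c_{1}f\ln|f|$ and $T_{2}(f)=c_{1}^{2}f\ln^{2}|f|+c_{2}f\ln|f|$ with $c_{1}\not\equiv 0$; expanding $\ln|fg|=\ln|f|+\ln|g|$ shows that \eqref{moment_C} holds for $\alpha=1,2$, while $T_{2}$ is not of the form \eqref{f} and \eqref{c} fails because $2c_{1}^{2}\neq 0$. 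So the gap is not in your bookkeeping but in the black box that you and the paper both rely on; a correct treatment must either add hypotheses excluding the entropy branch or enlarge the conclusion to admit the terms $f\ln^{k}|f|$.
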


\begin{proof}
  Let $r$ and $N$ be positive integers, $\Omega \subset \mathbb{R}^{r}$ be a domain and assume that for all multi-indices $\alpha\in \mathbb{N}^{r}$, with $|\alpha|\leq N$ we are given a mapping $T_{\alpha}\colon \mathscr{C}(\Omega)\to \mathscr{C}(\Omega)$ such that 
 $T_{\mathbf{0}}$ is the identity mapping and for all multi-indices $\alpha \in \mathbb{N}^{r}$ with $0\neq |\alpha|\leq N$ we have 
 \[
  T_{\alpha}(f\cdot g)= \sum_{\beta \leq \alpha} \binom{\alpha}{\beta}T_{\beta}(f)\cdot T_{\alpha-\beta}(g)
 \]
 for all $f, g\in \mathscr{C}(\Omega)$. 
 
 We prove the statement by induction on the multi-index $\alpha$. 
 
 Let $\alpha \in \mathbb{N}^{r}$ be an arbitrary multi-index for which $|\alpha|=1$ holds. Then 
 \[
  T_{\alpha}(f\cdot g)=T_{\mathbf{0}}(f)T_{\alpha}(g)+T_{\alpha}(f)T_{\mathbf{0}}(g)=f \cdot T_{\alpha}(g)+T_{\alpha}(f) \cdot g
  \qquad 
  \left(f, g\in \mathscr{C}(\Omega)\right), 
 \]
since $T_{\mathbf{0}}= \mathrm{id}$ was assumed. Using Proposition \ref{prop_KonMil}, we obtain that there exists a continuous function $c_{\alpha}\in \mathscr{C}(\Omega)$ such that 
\[
  T_{\alpha}(f)(x)= c_{\alpha}(x)f(x)\ln(|f(x)|) 
  \qquad 
  \left(x\in \Omega, f\in \mathscr{C}(\Omega)\right). 
 \]
 
 Let now $k\in \left\{ 1, \ldots, N-1\right\}$ be arbitrary and assume that the statement of the theorem holds for all multi-indices $\beta\in \mathbb{N}^{r}$ for which we have $|\beta|\leq k$. Let further $\alpha\in \mathbb{N}^{r}$ be an arbitrary multi-index for which $|\alpha|=k+1$. 
 Then 
 \begin{align*}
  T_{\alpha}(f\cdot g) &= \sum_{\beta \leq \alpha}\binom{\alpha}{\beta}T_{\beta}(f)\cdot T_{\alpha-\beta}(g) \\
  &= T_{\mathbf{0}}(f)\cdot T_{\alpha}(g)+T_{\alpha}(f)\cdot T_{\mathbf{0}}(g)+ \sum_{\mathbf{0} \lneq \beta \lneq \alpha}\binom{\alpha}{\beta}T_{\beta}(f)\cdot T_{\alpha-\beta}(g) \\
  &= f\cdot T_{\alpha}(g)+T_{\alpha}(f)\cdot g+ \sum_{\mathbf{0} \lneq \beta \lneq \alpha}\binom{\alpha}{\beta} c_{\beta}f\ln(|f|)  \cdot c_{\alpha-\beta }g\ln(|g|)  \\
  &= f\cdot T_{\alpha}(g)+T_{\alpha}(f)\cdot g+ \left[\sum_{\mathbf{0} \lneq \beta \lneq \alpha}\binom{\alpha}{\beta} c_{\beta}(x)\cdot c_{\alpha-\beta }\right] \cdot f \ln(|f|) \cdot  g(x)\ln(|g|)  \\
 \end{align*}
holds for all $f, g\in \mathscr{C}(\Omega)$. Using Proposition \ref{prop_KonMil2}, taking into account that $k=0$, we obtain that there exists a continuous function $c_{\alpha}$ such that 
\[
 T_{\alpha}(f)(x)= c_{\alpha}(x)\cdot f(x) \cdot \ln(|f(x)|)
\]
is fulfilled for all $f\in \mathscr{C}(\Omega)$ and $x\in \Omega$. 
Further, the family of functions  $\{ c_{\alpha}\in \mathscr{C}(\Omega) :  0\neq |\alpha | \leq N \}$ necessarily satisfies \eqref{c}.

The converse implication is an easy computation.
\end{proof}

As we saw in the previous theorem, the moment sequences are quite poor on the $\mathscr{C}(\Omega)$ space. We note that if $N\geq 1$, then there are substantially more diverse moment sequences in the space $\mathscr{C}^{N}(\Omega)$, see Remark \ref{rem3}. However, this will be dealt with in one of our future work.

\begin{ackn}
 The research of Eszter Gselmann has been supported by project no.~K134191 that has been implemented with the support provided by the National Research, Development and Innovation Fund of Hungary,
financed under the K\_20 funding scheme. 

The work of Aleksandra \'Swi\k{a}tczak is implemented under the project "Curriculum for advanced doctoral education \&
training - CADET Academy of TUL" co-financed by the STER Programme - Internationalization of
doctoral schools.

This article has been completed while one of the authors (Aleksandra \'Swi\k{a}tczak), was the
Doctoral Candidate in the Interdisciplinary Doctoral School at the Lodz University of Technology, Poland.
\end{ackn}



\end{document}